\documentclass[reqno]{amsart}
\usepackage{amsmath}
\usepackage[dvips]{graphicx}
\usepackage{amsfonts}
\usepackage{amssymb}
\usepackage{latexsym}
\graphicspath{{Img/}}
\newtheorem{theorem}{Theorem}
\theoremstyle{plain}

\newtheorem{corollary}{Corollary}

\newtheorem{proposition}{Proposition}

\numberwithin{equation}{section}

\begin{document}

\title[Platonic geodesic trees]{Platonic geodesic trees in the $K-$Space}
\author{Anastasios N. Zachos}
\address{University of Patras, Department of Mathematics, GR-26500 Rion, Greece}
\email{azachos@gmail.com}
\keywords{Fermat coordinate system, Fermat tree, Steiner tree, Intermediate tree, Platonic tree, $K-$space} \subjclass[2010]{Primary 05C05; Secondary 52B11}

\begin{abstract}  We call Platonic geodesic trees for a boundary vertex set $V$ and an added vertex set $Q$ in the $K-$Space ( $N-$dimensional Euclidean Space $\mathbb{R}^{N}$ with zero constant sectional curvature $K=0$ or the $n-$dimensional sphere $S_{K}^{N}$ with a positive constant sectional curvature equal to $K$ with radius $R=\frac{1}{\sqrt{K}}$ or the $n-$dimensional hyperbolic space $H_{K}^{N}$ with negative constant sectional curvature equal to $-K,$ for $K>0.$) a family of geodesic trees, whose each two incident edges meet at an angle $90^{o}$ and/or $120^{o},$ and/or $150^{o}$ and/or $180^{o}.$ Let $m$ be the number of connections (geodesic segments or geodesic rays) of a point in the $K-$Space. We prove that:

(A) If $m\ge 4,$ there exists a Platonic geodesic tree having $m$ edges (geodesic segments), which form $m$ platonic angles with measure $120^{o}$ and $(\frac{m(m-1)}{2}-m)$ platonic angles with measure $90^{o}.$ (Existence of a Fermat Coordinate system)

(B) If $m=2,$ there exists a Platonic geodesic tree with three vertices and 2 edges, which meet at $120^{\circ}$ or $150^{\circ}$ or $180^{\circ}$ (Minimal spanning geodesic tree).

(C) If $m=3,$ there exists a Platonic tree and the Steiner point is identical with the Fermat point, having three edges, which meet at $120^{\circ}$ (Hexagonal coordinate system).

(D) If $m$ is even and $m \ge 4,$  there exists a Platonic geodesic tree having $m$ edges , which form platonic angles with only two distinct measures $90^{o}$ and $180^{\circ}.$
(Existence of a Cartesian coordinate system).

(E) If $m\ge 4$, there does not exist a Platonic geodesic tree having $m$ edges , which form only platonic angles with measure $120^{o}$
(Non existence of a Fermat coordinate system).

(F) If $m\ge 3,$ there does not exist a Platonic geodesic tree having $m$ edges, which form platonic angles with only one measure $150^{o},$ or two distinct measures $120^{\circ}$ and $150^{\circ}$ or two distinct measures $90^{\circ}$ and $150^{\circ}$ (Non existence of a Fermat coordinate system).

A classification of platonic geodesic trees with boundary vertex set $V$ and added vertex set $Q$ in the $K-$Space is given by conditions (A), (B), (C), (D), (E), (F), which characterize each vertex in $V\cup Q.$

\end{abstract}\maketitle

\section{Introduction}

The $n-$dimensional $K$-Space is the $N-$dimensional Euclidean Space $\mathbb{R}^{N}$ with zero constant sectional curvature $K=0,$ the $n-$dimensional sphere $S_{K}^{N}$ with a positive constant sectional curvature equal to $K$ with radius $R=\frac{1}{\sqrt{K}},$ the $n-$dimensional hyperbolic space $H_{K}^{N}$ with negative constant sectional curvature equal to $-K,$ for $K>0.$

A Steiner minimal tree ($SMT_{K}(V)$) for $m$ points $A_{1},A_{2},\ldots, A_{m}$ (boundary vettices or terminals) in the $K-$Space is a geodesic tree whose vertex set $V$ contains these points having minimal length,i.e., the sum of the lengths of all its geodesic (shortest) segments (edges) are as small as possible. Added vertices other than the boundary vertices are called Steiner points. Steiner points are interior points of the convex hull of the $m$ distinct boundary vertices. A geodesic tree is called a Steiner tree ($ST_{K}(V)$) if it satisfies the following properties (\cite{Cieslik:05}, \cite[Theorem~2.1,p.~120,]{IvanTuzh:094}, \cite{NayaInnami:}:

(1) Any two edges meet at an angle at least $120^{o}$

(2) There are at most $m-2$ Steiner points.

(3) Each Steiner point has exactly three incident edges (degree three). The edges emanating from it to three neighboring vertices are unique geodesic segments.

(4) The geodesic tree has at most $2m-3$ edges. Equality holds if the boundary vertices of the vertex set $V$ have one connection (degree one).

Properties (1) and (3) imply that each Steiner point is incident to three geodesic segments meeting at $120^{o}$ (\cite{GilbertPollak:68}).
We note that the length of a geodesic segment connecting a boundary ideal vertex located at the boundary of the hyperbolic space $\partial H^{N}$ with a Steiner point equals with the length of a geodesic ray and is infinite. Referring to the other cases, the length of the geodesic segment connecting two points in the $K-$Space is finite.

An $ST_{K}(V)$ is called a full Steiner tree $FST_{K}(V)$ if has exactly $m-2$ Steiner points. The topology of $ST_{K}$ is a connection matric specifying which pairs of the vertex set $V=\{A_{1},A_{2},\ldots,A_{m}\}$ and the set of Steiner points $Q=\{S_{1},S_{2},\ldots, S_{k}\},$ for $0 \le k \le m-2$ have a connecting geodesic segment. The description of topologies of $ST_{0}(V)$ and the decomposition of a $ST_{0}(V)$ in $FST_{0} (V)$ trees in $\mathbb{R}^{2}$ is given in \cite{GilbertPollak:68} and generalized in metric spaces and manifolds in \cite{IvanTuzh:094}, \cite{IvanovTuzhilin:01b} and \cite{IvanovTuzhilinCieslik:03}.

A spanning tree on the vector set $V$ is a geodesic tree interconnecting all the boundary vertices of $V$ with geodesic segments. A geodesic segment connects two boundary vertices. The shortest spanning tree is called a minimal spanning tree $MST_{K}(V)$ on $V$ and $MST_{K}(V\cup Q)=SMT_{K}(V )$

Hwang and Weng (\cite{HwangWeng:86}, \cite[1.3,p.~154-158]{IvanTuzh:094} ) introduced a hexagonal coordinate system using three axes $(U,V,W)$ $120^{o}$ apart $120^{o},$ and gave an efficient method to construct a $ST_{0}(V)$ with a given topology.

A geodesic tree on the vertex set $V$ with only one added vertex (Fermat point) is called a Fermat tree $FT_{K}(V)$. A Fermat tree has $m$ geodesic segments interconnecting this point with the boundary vertices (\cite[1.1,pp.~1-11]{Cieslik:05}\cite{NodaSakaiMorimoto:91}).

An intermediate tree $IT_{K}(V)$ for $m$ points $A_{1},A_{2},\ldots, A_{m}$ (boundary vettices or terminals) in the $K-$Space is a geodesic tree whose vertex set $V$ contains these points having minimal length and the added vertices are allowed to have connections upto $m.$ The added vertices may be Fermat points and/or Steiner points. Some constructions of intermediate trees in $\mathbb{R}^{3}$ are given in \cite{Zachos:14} and \cite{Zachos:17}.

Plato introduced two Platonic (right) triangles with angles ($90^{o}-45^{o}-45^{o}$) and $90^{o}-30^{o}-60^{o}$ in his dialogue Timaeus (\cite{Plato:61} ), which compose the Platonic solids (regular polyhedra): a tetrahedron, cube, octahedron and icosahedron. By gluing six Platonic triangles $90^{o}-30^{o}-60^{o},$ we get an equilateral (regular) triangle. We call Platonic angle an angle with measure $30^{o}$ or $60^{o}$ or $90^{o}$ or $120^{o}$ or $150^{o}.$
By using the third, fourth, fifth and sixth multiple ($90^{o},$ $120^{o},$ $150^{\circ},$ $180^{\circ}$) of the platonic angle with measure $30^{o},$ we shall introduce the Platonic geodesic trees in the $K-$Space.

Platonic geodesic trees $PT_{K}(V),$ is a class of $IT_{K}(V),$ such that the platonic angles formed by two incident edges have measure $90^{o}$ and/or $120^{o},$ and/or $150^{\circ},$ and/or $180^{\circ}.$
Almost Platonic geodesic trees $APT_{K}(V),$ is a class of $IT_{K}(V),$ such that the angles formed by two incident edges have measure $90^{o}$ and/or $120^{o}$ and/or $180^{\circ}$ and there exists at least one angle with measure ($90^{\circ}+\epsilon)^{o},$ where $\epsilon \ne 30,60,90 $ is a positive real number.

Examples of Platonic geodesic trees are
(1) $FST_{K}(V),$ such that each two incident edges meet at an angle $120^{\circ}$

(2) An $IT_{K}(V),$ whose boundary vertices $2n$ are the vertices of an orthoplex having one added vertex (Fermat-point) with $2n$ connections. Each two incident edges meet at an angle $90^{o}$ or $180^{\circ}.$

(3) A $MST_{K}(V)$ with $m$ boundary vertices located on a geodesic ray in the $K-Space.$ Each two incident edges meet at $180^{\circ}.$

(4) A $MST_{0}(V)$ with 6 boundary vertices forming a regular hexagon in $\mathbb{R}^{2}.$ Each two incident edges meet at $120^{\circ}.$

 A $ST_{K}(V),$ which is not a $FST_{K}(V)$ is an $APT_{K}(V),$ if there are at least two incident edges, which meet at $(120+\epsilon)^{\circ}$
 Examples of an $APT_{K}(P)$ are two geodesic segments (or geodesic rays in $H^{2}$), which meet at $(120+\epsilon)^{\circ},$ or a $MST_{0}(V)$ with 10 boundary vertices forming a regular decagon in $\mathbb{R}^{2}.$ Each two incident edges meet at $144^{\circ}.$

Let $O$ be a point in the $n-$dimensional $K$-Space and  consider $m$ geodesic rays $l_{1}, l_{2},\ldots, l_{m}$ issuing from it, such that they do not all lie inside a single half-space. We will consider a point $A_{i},$ which may move along each geodesic ray $l_{i},$ for $i=1,2,\ldots, m.$ These vertices may form a closed geodesic simplex $A_{1}A_{2}\ldots A_{m}.$ A classical study for closed simplexes with vertices on prescribed rays is given in \cite[Chapter~9]{Alexandrov:05}.

The main purpose of this paper is to prove the following theorems:

Theorem~1 Let $l_{1},l_{2},\ldots,l_{m}$ geodesic rays issuing from $O$ in the $m-1$ dimensional $K-Space.$

(A) If $m\ge 4,$ there exists a Platonic geodesic tree $PT_{K}(V)$ having $m$ edges (geodesic segments), which form $m$ platonic angles with measure $120^{o}$ and $(\frac{m(m-1)}{2}-m)$ platonic angles with measure $90^{o}.$ (Existence of a Fermat Coordinate system)

(B) If $m=2,$ there exists a Platonic geodesic tree $PT_{K}(V)$ with three vertices and 2 edges, which meet at $120^{\circ}$ or $150^{\circ}$ or $180^{\circ}$ (Minimal spanning geodesic tree).

(C) If $m=3,$ there exists a Platonic tree $PT_{K}(V)$ and the Steiner point is identical with the Fermat point, having three edges, which meet at $120^{\circ}$ (Hexagonal coordinate system).

(D) If $m$ is even and $m \ge 4,$  there exists a Platonic geodesic tree $PT_{K}(V)$ having $m$ edges , which form platonic angles with only two distinct measures $90^{o}$ and $180^{\circ}.$
(Existence of a Cartesian coordinate system).

(E) If $m\ge 4$, there does not exist a Platonic geodesic tree $PT_{K}(V)$ having $m$ edges , which form only platonic angles with measure $120^{o}$
(Non existence of a Fermat coordinate system).

(F) If $m\ge 3,$ there does not exist a Platonic geodesic tree $PT_{K}(V)$ having $m$ edges, which form platonic angles with only one measure $150^{o},$ or two distinct measures $120^{\circ}$ and $150^{\circ}$ or two distinct measures $90^{\circ}$ and $150^{\circ}$ (Non existence of a Fermat coordinate system).


Theorem~2

If an $IT_{K}(V)$ has $m$ boundary vertices with upto $m-2$ added vertices that lie on their convex geodesic hull and the geodesic rays formed by the edges of this geodesic tree give a Fermat Coordinate System and/or Cartesian Coordinate System and/or Hexagonal coordinate system, then $IT_{K}(V)=PT_{K}(V).$




\section{Platonic geodesic trees in the $n-$ dimensional K space }
In this section, we give a characterization of Platonic geodesic trees $PT_{K}(V)$ with $m$ vertices $V=\{ A_{1},  A_{2},\ldots, A_{m} \}$ in the $K-$Space having one added vertex $Q=\{ O \}$ with  $m$ connections geodesic segments (or rays) $A_{i}O,$ such that each two incident edges meet an angle $\angle A_{i}OA_{j},$ which takes values in $\{90^{\circ}, 120^{\circ}, 150^{\circ}, 180^{\circ} \}$ (Theorem~\ref{theorem1}). By applying theorem~\ref{theorem1} for each vertex in $V\cup Q,$ where $Q$ is the set of added vertices (Fermat points or Steiner points), we introduce a classification of Platonic geodesic trees $PT_{K}(V)$ in the $K-$Space (Theorem~\ref{theorem2}).

Let $O$ be a point in the $n-$dimensional $K$-Space and  consider $m$ geodesic rays $l_{1}, l_{2},\ldots, l_{m}$ issuing from it, such that they do not all lie inside a single half-space. We will consider a point $A_{i},$ which may move along each geodesic ray $l_{i},$ for $i=1,2,\ldots, m.$ These vertices may form an $m-$simplex $\Delta^{m}(V)=A_{1}A_{2}\ldots A_{m}.$

\begin{theorem}\label{theorem1}
The following conditions characterize a $PT_{K}(V)$ with one added vertex $O$ having $m$ edges issuing from $O,$ (except $m=2,$ where $O$ coincides with one vertex frome the vertex set of three vertices):

A) If $m\ge 4,$ there exists a Platonic geodesic tree $PT_{K}(V)$ having $m$ edges (geodesic segments), which form $m$ platonic angles with measure $120^{o}$ and $(\frac{m(m-1)}{2}-m)$ platonic angles with measure $90^{o}.$ (Existence of a Fermat Coordinate system)

(B) If $m=2,$ there exists a Platonic geodesic tree $PT_{K}(V)$ with three vertices and 2 edges, which meet at $120^{\circ}$ or $150^{\circ}$ or $180^{\circ}$ (Minimal spanning geodesic tree).

(C) If $m=3,$ there exists a Platonic tree $PT_{K}(V)$ and the Steiner point is identical with the Fermat point, having three edges, which meet at $120^{\circ}$ (Hexagonal coordinate system).

(D) If $m$ is even and $m \ge 4,$  there exists a Platonic geodesic tree $PT_{K}(V)$ having $m$ edges , which form platonic angles with only two distinct measures $90^{o}$ and $180^{\circ}.$
(Existence of a Cartesian coordinate system).

(E) If $m\ge 4$, there does not exist a Platonic geodesic tree $PT_{K}(V)$ having $m$ edges , which form only platonic angles with measure $120^{o}$
(Non existence of a Fermat coordinate system).

(F) If $m\ge 3,$ there does not exist a Platonic geodesic tree $PT_{K}(V)$ having $m$ edges, which form platonic angles with only one measure $150^{o},$ or two distinct measures $120^{\circ}$ and $150^{\circ}$ or two distinct measures $90^{\circ}$ and $150^{\circ}$ (Non existence of a Fermat coordinate system).

\end{theorem}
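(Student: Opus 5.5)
Every part of Theorem~\ref{theorem1} concerns only the star of $m$ geodesic rays $l_1,\dots,l_m$ issuing from $O$. The angle $\angle A_iOA_j$ depends only on the unit tangent vectors $u_1,\dots,u_m$ of the rays at $O$, not on the lengths of the edges $OA_i$. So I first pass to the tangent space $T_O$. In $\mathbb{R}^N$ this is immediate, and in $S^N_K$ and $H^N_K$ the exponential map at $O$ is a radial isometry on rays, so angles at $O$ are Euclidean angles between tangent vectors. Each statement then becomes a question about unit vectors $u_i$ with prescribed inner products $\langle u_i,u_j\rangle\in\{0,-\tfrac12,-\tfrac{\sqrt3}{2},-1\}$. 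Existence of a configuration of unit vectors gives a geodesic tree: choose $A_i=\exp_O(t_iu_i)$ with $t_i>0$ (and $t_i<\pi R$ on the sphere). Non-existence reduces to positive semidefiniteness of the Gram matrix $G=(\langle u_i,u_j\rangle)$, together with the condition $\sum_{i<j}\langle u_i,u_j\rangle\ge -\tfrac m2$, which comes from $|\sum u_i|^2\ge 0$.

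\textbf{Existence parts (A)--(D).} For (B) I take two unit vectors at angle $120^\circ$, $150^\circ$ or $180^\circ$. The three vertices are $A_1$, $O$, $A_2$, with $O$ regarded as a vertex, so this is a spanning path. For (C) I take three coplanar vectors at mutual angles $120^\circ$, with $\sum u_i=0$. The first-variation formula then shows $O$ is the Fermat point, and also the Steiner point, of $A_1A_2A_3$ in the $K$-space. For (D), with $m=2k$, I take $\pm e_1,\dots,\pm e_k$, an orthoplex frame; every pair meets at $90^\circ$ or $180^\circ$.

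For (A), exactly $m$ of the $\binom m2$ pairs must be at $120^\circ$ and all other pairs at $90^\circ$. I read the $m$ pairs as the edges of a $2$-regular graph on $\{1,\dots,m\}$. The natural choice is a disjoint union of triangles, each triangle a hexagonal frame spanning its own plane, with the planes mutually orthogonal. This realizes $m=3k$ with ambient dimension $2k$. Since the theorem only asserts existence for $m\ge4$, I will check that $G=\tfrac32 I-\tfrac12 J$ blockwise is positive semidefinite and then exhibit the vectors. If general $m$ is intended, I would instead try a single $m$-cycle $C_m$. Then $G=I-\tfrac12 A(C_m)$, whose eigenvalues $1-\cos(2\pi j/m)$ are all $\ge0$, so $G$ is a Gram matrix of rank $m-1$. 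It is realized in dimension $m-1$, which matches the hypothesis of an $(m-1)$-dimensional $K$-space in Theorem~1. I will present this cycle construction, since it works for all $m\ge4$.

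\textbf{Non-existence parts (E), (F).} If all pairs are at $120^\circ$, then
$$0\le\Big|\sum u_i\Big|^2=m+2\binom m2\!\left(-\tfrac12\right)=m-\tfrac{m(m-1)}{2},$$
which forces $m\le3$. This proves (E). For (F) with only $150^\circ$, the same computation gives $m-\tfrac{\sqrt3}{2}m(m-1)\ge0$, so $m\le 1+2/\sqrt3<3$, a contradiction for $m\ge3$.

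The two mixed cases of (F) are the hard step, because the inequality above alone may not suffice. For $\{90^\circ,150^\circ\}$ with $m\ge3$ I would argue locally. Take $u_1,u_2$ at $150^\circ$ and any third vector $u_3$. Examining the $3\times3$ Gram minors, an angle pattern $(150,150,x)$ with $x\in\{90,150\}$ has determinant $1-2a^2-c^2-2a^2c<0$ for $a=-\tfrac{\sqrt3}2$, so it is impossible. The pattern $(150,90,90)$ is realizable, however, so a global argument is still needed. I would then reinterpret the statement as requiring the rays not to lie in a half-space, as in the standing assumption of this section, i.e.\ $0\in\operatorname{conv}\{u_i\}$. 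Writing $\sum\lambda_iu_i=0$ with $\lambda_i\ge0$ and pairing with each $u_j$ should force a contradiction from the sign pattern of admissible inner products. Making this rigorous, and deciding exactly which hypothesis (not in a half-space, or the tree minimality of an $IT_K(V)$) excludes these configurations, is where I expect the main difficulty. The case $\{120^\circ,150^\circ\}$ I would handle the same way, combining the $3\times3$ minor test with the convex-hull condition.
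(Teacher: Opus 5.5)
\textbf{Gap: the two mixed cases of (F).} Your parts (A)--(E) and the pure-$150^\circ$ case of (F) are sound and follow the paper. The paper's cosine matrix in (A), with entries $-\tfrac12$ at $(j,j+1)$ and $(1,m)$ and $0$ elsewhere off the diagonal, is exactly your $I-\tfrac12A(C_m)$. Your eigenvalue check adds the realizability that the paper leaves implicit. You should say explicitly that $G\mathbf 1=0$ gives $\sum u_i=0$, since this balancing is what makes $O$ a Fermat point. (E) is the paper's squaring argument.

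You leave the cases $\{90^\circ,150^\circ\}$ and $\{120^\circ,150^\circ\}$ open, and the cause is in your setup. You reduce non-existence to $G\succeq0$ plus the inequality $|\sum u_i|^2\ge0$, which discards the balancing condition. But $O$ is the Fermat point of the tree, which you yourself use in (C) via first variation. So $\sum u_i=0$ holds exactly, and squaring it gives
\[
0=m+2\sum_{i<j}\cos\angle A_iOA_j=m-n_{120}-\sqrt3\,n_{150},
\]
where $n_{120}$ and $n_{150}$ count the pairs at $120^\circ$ and $150^\circ$. Since $m-n_{120}$ is an integer and $\sqrt3$ is irrational, $n_{150}=0$. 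This contradicts the presence of a $150^\circ$ angle, so all three sub-cases of (F) fail at once. That is the paper's proof, and it needs no convex-hull argument. The inequality version cannot work here: for $\{90^\circ,150^\circ\}$ it is satisfied, as you noticed.

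\textbf{Finishing your own route instead.} If you prefer to keep your minor test, it can be completed, but first correct a sign. For inner products $a,a,c$ the Gram determinant is $1-2a^2-c^2+2a^2c$. With your $-2a^2c$, the $(150^\circ,150^\circ,150^\circ)$ value would be positive.
\begin{itemize}
\item \emph{Case $\{120^\circ,150^\circ\}$.} It falls immediately for $m\ge3$. Any triple containing a $150^\circ$ pair has angles summing to at least $390^\circ>360^\circ$, so no global argument is needed.
\item \emph{Case $\{90^\circ,150^\circ\}$.} The test shows no $u_i$ is at $150^\circ$ from two others, so the $150^\circ$ pairs form a matching. Then $G$ is block diagonal with blocks $\begin{pmatrix}1&-\frac{\sqrt3}{2}\\ -\frac{\sqrt3}{2}&1\end{pmatrix}$ and $(1)$, hence positive definite. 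So the $u_i$ are linearly independent, $\sum u_i\neq0$, and indeed the rays lie in an open half-space. The configuration is therefore excluded under either hypothesis you considered.
\end{itemize}
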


\begin{proof}

(A)
Let $e_{i}=\frac{\vec{OA_{i}}}{| OA_{i} |}$ be a unit direction tangent vector of the ray $l_{i}$ at $O,$ for $i=1,2,\ldots, m.$ $O$ is the Fermat point having $m$ connections with the geodesic segments $OA_{i}.$ Hence, we have the balancing condition of $e_{i}$ at $O:$

\begin{equation}\label{eqone}
\sum_{i=1}^{m}e_{i}=\vec{0}.
\end{equation}
By squaring both parts of (\ref{eqone}), we need to set $m$ angles with measure $120^{\circ}$ and the rest $\frac{m(m-1)}{2}-m$ angles with measure $90^{\circ}.$

Let $\{ \cos \angle A_{i}O A_{j} \}$ be the cosine matrix of the angles $\angle A_{i} O A_{j},$ for $i,j=1,2,\ldots, m.$
We set $\cos \angle A_{i}OA_{i}=1,$  for $i=1,2,\ldots, m,$ \[\angle A_{j} O A_{j+1}=\angle A_{j+1} O A_{j}=120^{\circ},\] for $j=1,2,3,\ldots, m-1,$ which yields \[\cos \angle A_{j} O A_{j+1}=\cos \angle A_{j+1} O A_{j}=-\frac{1}{2}.\]

We set \[\angle A_{1}O A_{m}=\angle A_{m}OA_{1}=120^{\circ},\] which yields \[\cos \angle A_{1} O A_{m}=\cos \angle A_{m} O A_{1}= -\frac{1}{2}.\]

We set the rest $\frac{m(m-1)}{2} - m$ angles \[\angle A_{i} O A_{j} =90^{\circ},\] which yields \[\cos \angle A_{i}O A_{j}=0,\] for $i<j.$
Hence, we get \[\angle A_{j} O A_{i} =90^{\circ},\] which yields \[\cos \angle A_{j}O A_{i}=0.\]
By following this construction, we get exactly two elements of the cosine matrix with value $-\frac{1}{2}$ one with value $1$ and the rest with value $0.$
in each row and each column.

These values satisfy the following combinatorial system of equations of unit tangent vectors $e_{i}$ at $O,$ derived by (\ref{eqone}):

\[-e_{i_{1}}=\sum_{i=1, i\ne i_{1} }^{m} e_{i},\]
\[-e_{i_{1}}-e_{i_{2}}=\sum_{i=1, i\ne \{i_{1}, i_{2}\} }^{m} e_{i},\]
\[-e_{i_{1}}-e_{i_{2}}-e_{i_{3}}=\sum_{i=1, i\ne \{i_{1}, i_{2},i_{3}\} }^{m} e_{i},\]
$\cdots$
\[-e_{i_{1}}-e_{i_{2}}-\ldots-e_{i_{k}}=\sum_{i=1, i\ne \{i_{1}, i_{2},i_{3},\ldots, i_{k}\} }^{m} e_{i},\]
for $i_{1},i_{2},\ldots, i_{k} \in \{1,2,\ldots, m\}$ and $k<\frac{m}{2}.$

(B)  $m=2$ Two geodesic segments $A_{1}A_{2}$ and $A_{2}A_{3}$ (or rays in $H_{K}^{2}$), which meet at angle $\angle A_{1}A_{2}A_{3}=120^{\circ}$ or $150^{\circ}$ or $180^{\circ}$ gives a minimum spanning tree $MST_{K}( \{ A_{1}, A_{2}, A_{3} \} )  )$ and $O=A_{2}$ is the Fermat point (degenerate Fermat tree or Steiner tree), which coincides with a $PT_{K}(\{ A_{1}, A_{2}, A_{3} \}).$

(C) For $m=3,$ consider a vertex set of three vertices, which form a geodesic triangle with angles $120^{\circ}-\epsilon,$ where $\epsilon$ is a positive real number. Hence, we get $e_{1}+e_{2}+e_{3}=\vec{0},$ such that: $\angle A_{1}A_{0}A_{2}=\angle A_{2}A_{0}A_{3}=\angle A_{1}A_{0}A_{3}=120^{\circ}$ and $PT_{K}(\{ A_{1}, A_{2}, A_{3} \})=MST_{K}(\{ A_{1}, A_{2}, A_{3} \})$ with one added vertex $O$ (Fermat point or Steiner point).

(D) For $m$ even, we get an $\frac{m}{2}$ orthoplex having $m$ vertices and $\frac{m}{2}$ diagonals, which meet at a right angle. Hence, the edges of the geodesic tree form $m (\frac{m}{2}-1)$ right angles and $\frac{m}{2}$ straight angles, which yields a $PT_{K}(V)$.

(E) For $m\ge 4,$ we assume that $\frac{m(m-1)}{2}$ $\angle A_{i}O A_{j}$  have measure $120^{\circ}$ for $i,j=1,2, \ldots, m$ and $ i<j.$ By squaring both parts of the balancing condition of $e_{i}$ in(\ref{eqone}), we obtain:
\[ m - \frac{m (m-1) }{2} = 0 ,\]

 which gives $m=3 <4 .$
Therefore, there does not exist a $PT_{K}(V)$ with $\frac{m(m-1)}{2}$ angles with measure $120^{\circ}$ for $m\ge 4.$

(F) For $m\ge 3,$ we assume that there is at least one angle $\angle A_{i}OA_{j}$ with measure $150^{\circ}.$ By squaring both parts of the balancing condition of $e_{i}$ in(\ref{eqone}), we obtain that the irrational number $m +a + b\sqrt{3}=0,$ for $a,b$ real numbers. Hence, there does not exist a $PT_{K}(V),$ such that each two incident edges meet at an angle having only one measure $150^{o},$ or two distinct measures $120^{\circ}$ and $150^{\circ}$ or two distinct measures $90^{\circ}$ and $150^{\circ}.$

\end{proof}

\begin{theorem}\label{theorem2}
If an $IT_{K}(V)$ has $m$ boundary vertices with upto $m-2$ added vertices that lie on their convex geodesic hull and the geodesic rays formed by the edges of this geodesic tree give a Fermat Coordinate System and/or Cartesian Coordinate System and/or Hexagonal coordinate system, then $IT_{K}(V)=PT_{K}(V).$
\end{theorem}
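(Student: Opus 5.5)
The plan is to reduce Theorem~\ref{theorem2} to a vertex-by-vertex check, using Theorem~\ref{theorem1} as the local classification. A geodesic tree is a $PT_{K}(V)$ exactly when, at every vertex $X\in V\cup Q$, every pair of incident edges meets at an angle in $\{90^{\circ},120^{\circ},150^{\circ},180^{\circ}\}$. Since angles are measured between unit tangent vectors in the tangent space $T_{X}$, the whole question is local. So I fix an arbitrary vertex $X$ of the given $IT_{K}(V)$, with degree $d$, and look at the $d$ geodesic rays issuing from $X$ along its incident edges. By hypothesis these rays are the axes of a Fermat, Cartesian, or Hexagonal coordinate system at $X$.

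\textbf{Step 1: added vertices.} Let $X\in Q$. Because the intermediate tree has minimal length and $X$ lies in the convex geodesic hull of the boundary vertices, the first variation of length at $X$ gives the balancing condition (\ref{eqone}), $\sum e_{i}=\vec{0}$, in $T_{X}$. I then split into the three admissible coordinate systems.
\begin{itemize}
\item If $d=3$ and the system is hexagonal, Theorem~\ref{theorem1}(C) gives three angles of $120^{\circ}$.
\item If $d\ge 4$ is even and the system is Cartesian, the rays are the $\pm$ axes of an orthoplex. By Theorem~\ref{theorem1}(D), every angle is $90^{\circ}$ or $180^{\circ}$.
\item If $d\ge 4$ and the system is Fermat, the cosine matrix is the cyclic one built in Theorem~\ref{theorem1}(A), so every angle is $120^{\circ}$ or $90^{\circ}$.
\end{itemize}
Theorem~\ref{theorem1}(E),(F) excludes the remaining conceivable patterns (all $120^{\circ}$ for $d\ge 4$, or any pattern involving $150^{\circ}$ at a balanced vertex). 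So no other configuration can arise at a balanced added vertex, and the case list is exhaustive. In every case the angles at $X$ are Platonic.

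\textbf{Step 2: boundary vertices.} Let $X\in V$. If $d=1$ there is no angle to check. If $d=2$, then $X$ plays the role of $O=A_{2}$ in Theorem~\ref{theorem1}(B). The two edges form a degenerate Fermat configuration, and the coordinate-system hypothesis, read as a hexagonal or Cartesian frame restricted to two rays, forces an angle of $120^{\circ}$ or $180^{\circ}$. Case (B) also allows $150^{\circ}$. If $d\ge 3$, I argue as in Step 1: the rays at $X$ form one of the three coordinate systems, so their pairwise angles are read off from the cosine matrices of (A), (C), (D).

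\textbf{Step 3: conclusion.} Since every vertex passes the local test, $IT_{K}(V)$ satisfies the defining property of a $PT_{K}(V)$, and the two trees coincide.

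\textbf{Main obstacle.} The delicate point is Step 2 for boundary vertices of degree $\ge 2$. There the balancing condition need not hold, so Theorem~\ref{theorem1} cannot be quoted verbatim. The conclusion has to come purely from the hypothesis that the incident rays form one of the listed coordinate systems. I would handle this by taking the coordinate-system hypothesis at face value as prescribing the angle set, with no balancing assumed. I would also check the low-degree cases $d=2,3$ by hand to make sure a $150^{\circ}$ angle at a leaf-adjacent boundary vertex is still permitted.
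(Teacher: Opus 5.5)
Your proposal follows the paper's own argument: a vertex-by-vertex check in which each added vertex falls under case (A), (C) or (D) of Theorem~\ref{theorem1} and each degree-two boundary vertex under case (B), so every angle lies in $\{90^{\circ},120^{\circ},150^{\circ},180^{\circ}\}$. Drop your claim that (E) and (F) make the case list exhaustive at a balanced vertex: it is false, since the balanced tetrahedral configuration of Proposition~\ref{Propfive} is neither of type (A), (C), (D) nor excluded by (E), (F). It is not load-bearing, though, because the case split comes directly from the coordinate-system hypothesis.
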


\begin{proof}
Let an intermediate geodesic tree $IT_{K}(V)$ with $m$ boundary vertices and an added set of vertices $Q,$ which contains upto $m-2$ vertices in the $K-$space. Each point $S_{i}$ in $Q$ may have $k_{i},$ for $i=1,2,\ldots, m-2$ and refers to the conditions of (A), (B), (C), (D) of Theorem~\ref{theorem1}. There might be some vertices of the original vertex set $V$ that fullfill case (B) of Theorem~1. Hence, we get: $IT_{K}(V)=PT_{K}(V).$
\end{proof}

\begin{proposition}\label{propthree}
If $m$ is even, such that: $\angle A_{j} O A_{j+2}=\angle A_{1} O A_{m}=\angle A_{2}OA_{m-1}=120^{\circ},$ for $j=1,2,3,\ldots, m-2,$ and the rest $\frac{m (m-1)}{2}-m$ angles $A_{i}OA_{j}=90^{\circ},$ there exists a $PT_{K}(V)$ with $m$ edges, which form $m$ platonic angles with measure $120^{o}$ and $(\frac{m(m-1)}{2}-m)$ platonic angles with measure $90^{o}.$ (Alternative construction of a Fermat coordinate system).
\end{proposition}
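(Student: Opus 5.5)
The plan is to reduce the proposition to the construction in part (A) of Theorem~\ref{theorem1} by a relabeling of indices. Then I would realize the prescribed angles by unit tangent vectors at $O$ and transport them into the $K$-Space with the exponential map. Throughout, $m$ is even and $m\ge 4$, since for $m=2$ the listed angles degenerate.

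\textbf{Step 1: identify the combinatorics.} Let $\Gamma$ be the graph on $\{1,\dots,m\}$ whose edges are the pairs whose angle is prescribed to be $120^{\circ}$. These are the $m-2$ pairs $\{j,j+2\}$ together with $\{1,m\}$ and $\{2,m-1\}$, so $\Gamma$ has exactly $m$ edges. Because $m$ is even, the pairs $\{j,j+2\}$ form two paths, $1-3-\cdots-(m-1)$ on the odd indices and $2-4-\cdots-m$ on the even ones. The pair $\{2,m-1\}$ joins the end $m-1$ of the odd path to the start $2$ of the even path, and $\{1,m\}$ closes the loop. Hence $\Gamma$ is a single Hamiltonian $m$-cycle $1,3,\dots,m-1,2,4,\dots,m,1$. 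After renumbering the rays along this cycle, the prescription coincides with the one in part (A): consecutive rays meet at $120^{\circ}$, including the closing pair, and all others meet at $90^{\circ}$. Each index has exactly two $120^{\circ}$ partners, which confirms there are $m$ angles of $120^{\circ}$ and $\frac{m(m-1)}{2}-m$ of $90^{\circ}$.

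\textbf{Step 2: realize the Gram matrix.} The candidate cosine matrix is $G=I-\frac12 A(C_m)$, where $A(C_m)$ is the adjacency matrix of the cycle. Its eigenvalues are $1-\cos(2\pi k/m)$ for $k=0,\dots,m-1$. These are all nonnegative, and the only zero eigenvalue is $k=0$, with eigenvector $\mathbf 1=(1,\dots,1)$. So $G$ is positive semidefinite of rank $m-1$. It is therefore the Gram matrix of unit vectors $e_1,\dots,e_m$ in $\mathbb R^{m-1}$, which we identify with the tangent space $T_O$ of the $(m-1)$-dimensional $K$-Space. Moreover
\[
\Big|\sum_{i=1}^m e_i\Big|^2=\mathbf 1^{T}G\,\mathbf 1=m-\tfrac12\cdot 2m=0,
\]
so the balancing condition (\ref{eqone}) holds. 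This step makes rigorous what part (A) only asserts, namely that the cosine matrix is actually realizable.

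\textbf{Step 3: build the tree.} I would set $l_i=\exp_O(t e_i)$ and choose points $A_i$ on $l_i$. On the sphere I would take $|OA_i|<\pi R/2$, so the segments are unique and stay in a convex ball; in $H^{N}_K$ the $A_i$ may also be ideal points. The function $P\mapsto\sum_i d(P,A_i)$ is convex in $\mathbb R^{N}$ and $H_K^{N}$, and locally convex on $S_K^{N}$. Its gradient at $O$ is $-\sum e_i=\vec 0$, so $O$ is the Fermat point and the star $\bigcup OA_i$ is a Fermat tree, i.e.\ an $IT_K(V)$ with one added vertex. All of its incident-edge angles are $90^{\circ}$ or $120^{\circ}$, so it is a $PT_K(V)$.

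The main obstacle I expect is Step 2, certifying positive semidefiniteness and the dimension count. The circulant-spectrum argument handles it cleanly once Step 1 has identified $\Gamma$ as a single cycle. Step 1 is therefore the crux, and it is exactly where the evenness of $m$ enters.
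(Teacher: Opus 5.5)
Your proof is correct, and it takes a different and more complete route than the paper. The paper's proof re-runs the template of part (A) with the new index pattern. It writes down the cosine matrix, notes that every row and column has exactly two entries $-\frac12$ besides the diagonal $1$, and asserts compatibility with the balancing condition (\ref{eqone}) through a list of partial-sum identities. It never shows that unit vectors with this cosine matrix exist, never builds the tree, and presents the pattern as an alternative to (A).

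Your Step~1 shows it is not really an alternative. The $120^{\circ}$-pairs form the Hamiltonian cycle $1,3,\dots,m-1,2,4,\dots,m$, so Proposition~\ref{propthree} is exactly part (A) after renaming the rays. For instance, with $m=4$ the two constructions in Corollary~\ref{PTtetrahedron} differ only by swapping the labels $2$ and $3$.

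Your circulant spectrum then supplies what both the paper's (A) and this proposition leave unproved: realizability of the cosine matrix. It also gives the exact rank $m-1$, so the rays span $T_O$ and do not lie in a half-space. Positive semidefiniteness and $G\mathbf 1=0$ alone would already follow from the paper's own row observation via Gershgorin's theorem (diagonal $1$, off-diagonal absolute row sum $1$). That argument works for any pattern in which each index has exactly two $120^{\circ}$ partners. What your cycle identification adds is the rank, so calling Step~1 the crux slightly overstates it.

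Step~3 needs two small repairs.
\begin{enumerate}
\item On $S_K^N$, a critical point of a locally convex function is only a local minimum. To call $O$ the Fermat point, take $r=\max_i|OA_i|<\pi R/6$. Then the distance sum is convex on the convex ball $B(O,\pi R/2-r)$, and outside that ball it is at least $m(\pi R/2-2r)>mr$, which exceeds its value at $O$.
\item Drop the aside on ideal points in $H_K^N$. There the distance sum is infinite, so the Fermat condition would need Busemann functions, and it is not needed for existence anyway.
\end{enumerate}
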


\begin{proof}
Let $\{ \cos \angle A_{i}O A_{j} \}$ be the cosine matrix of the angles $\angle A_{i} O A_{j},$ for $i,j=1,2,\ldots, m.$
We set $\cos \angle A_{i}OA_{i}=1,$  for $i=1,2,\ldots, m,$ \[\angle A_{j} O A_{j+2}=\angle A_{j+2} O A_{j}=120^{\circ},\] for $j=1,2,3,\ldots, m-2,$ which yields \[\cos \angle A_{j} O A_{j+2}=\cos \angle A_{j+2} O A_{j}=-\frac{1}{2}.\]

We set \[\angle A_{1}O A_{m}=\angle A_{m}OA_{1}=120^{\circ}\] and \[ \angle A_{2}O A_{m-1}=\angle A_{m-1}OA_{2}=120^{\circ},\] which yields \[\cos \angle A_{1} O A_{m}=\cos \angle A_{m} O A_{1}=\cos \angle A_{2} O A_{m-1}=\cos \angle A_{m-1} O A_{2}=-\frac{1}{2}.\]

We set the rest $\frac{m(m-1)}{2} - m$ angles \[\angle A_{i} O A_{j} =90^{\circ},\] which yields \[\cos \angle A_{i}O A_{j}=0,\] for $i<j.$
Hence, we get \[\angle A_{j} O A_{i} =90^{\circ},\] which yields \[\cos \angle A_{j}O A_{i}=0.\]
By following this construction, we get exactly two elements of the cosine matrix with value $-\frac{1}{2}$ one with value $1$ and the rest with value $0.$
in each row and each column.

These values satisfy the following combinatorial system of equations of unit tangent vectors $e_{i}$ at $O,$ derived by (\ref{eqone}) for $m$ even:

\[-e_{i_{1}}=\sum_{i=1, i\ne i_{1} }^{m} e_{i},\]
\[-e_{i_{1}}-e_{i_{2}}=\sum_{i=1, i\ne \{i_{1}, i_{2}\} }^{m} e_{i},\]
\[-e_{i_{1}}-e_{i_{2}}-e_{i_{3}}=\sum_{i=1, i\ne \{i_{1}, i_{2},i_{3}\} }^{m} e_{i},\]
$\cdots$
\[-e_{i_{1}}-e_{i_{2}}-\ldots-e_{i_{\frac{m}{2}}}=\sum_{i=1, i\ne \{i_{1}, i_{2},i_{3},\ldots, i_{\frac{m}{2}}\} }^{m} e_{i},\]
for $i_{1},i_{2},\ldots, i_{\frac{m}{2}} \in \{1,2,\ldots, m\}.$

\end{proof}




\begin{corollary}\label{PTtetrahedron}
If $m=4,$ such that: $\angle A_{1}A_{0}A_{2}=\angle A_{3}A_{0}A_{4}=90^{\circ}$ and
$\angle A_{2}A_{0}A_{3}=\angle A_{1}A_{0}A_{4}=\angle A_{1}A_{0}A_{3}=\angle A_{2}A_{0}A_{4}=120^{\circ}$

or $\angle A_{1}A_{0}A_{3}=\angle A_{2}A_{0}A_{4}=90^{\circ}$ and
$\angle A_{1}A_{0}A_{2}=\angle A_{2}A_{0}A_{3}=\angle A_{3}A_{0}A_{4}=\angle A_{1}A_{0}A_{4}=120^{\circ}$

there exists a $PT_{K}(\{A_{1},A_{2},A_{3,}A_{4}\})$ with four edges, which form 4 platonic angles with measure $120^{\circ}$
and 2 platonic angles with measure $90^{\circ}.$
\end{corollary}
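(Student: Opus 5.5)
The plan is to treat the corollary as the case $m=4$ of the construction in Theorem~\ref{theorem1}(A), but to verify it directly. I will write down explicit unit tangent vectors at $O$ realizing the prescribed angles, check that they satisfy the balancing condition (\ref{eqone}), and then transport them into the $K$-Space by geodesic rays.

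\textbf{Step 1: the prescribed angles are consistent with balancing.} Take the first configuration. The cosine matrix has $1$ on the diagonal, $0$ in the positions $(1,2),(3,4)$, and $-\frac12$ in the remaining off-diagonal positions. Squaring (\ref{eqone}) gives
\[
\Bigl|\sum_{i=1}^4 e_i\Bigr|^2=4+2\bigl(0+0-4\cdot\tfrac12\bigr)=0 .
\]
So if such unit vectors exist, they automatically satisfy $\sum e_i=\vec 0$. Equivalently, every row of the cosine matrix sums to zero.

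\textbf{Step 2: realize the vectors in the tangent space.} The Gram matrix must be realizable, and I will show this by exhibiting vectors rather than by an eigenvalue computation. In $T_O\cong\mathbb{R}^3$ put
\[
e_1=\bigl(\tfrac12,\tfrac12,\tfrac{1}{\sqrt2}\bigr),\quad e_2=\bigl(\tfrac12,\tfrac12,-\tfrac{1}{\sqrt2}\bigr),\quad e_3=(0,-1,0),\quad e_4=(-1,0,0).
\]
These are unit vectors with $e_1\cdot e_2=0$, $e_3\cdot e_4=0$, and $e_1\cdot e_3=e_1\cdot e_4=e_2\cdot e_3=e_2\cdot e_4=-\frac12$. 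They also satisfy $e_1+e_2+e_3+e_4=\vec 0$.

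The second configuration is obtained by interchanging the labels $2$ and $3$. This turns the orthogonal pairs $\{1,2\},\{3,4\}$ into $\{1,3\},\{2,4\}$, with all other angles equal to $120^\circ$. Hence both cases reduce to the one vector system above.

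\textbf{Step 3: pass to the $K$-Space.} Let $l_i$ be the geodesic ray from $O$ with initial direction $e_i$ in the $3=m-1$ dimensional $K$-Space. Choose $A_i\in l_i$, taking them within a small enough distance of $O$ when $K>0$ so that all points lie in a convex ball. The angles $\angle A_iOA_j$ equal the angles between the $e_i$, so they are exactly $4$ angles of $120^\circ$ and $2$ of $90^\circ$. The gradient of $P\mapsto\sum_i d(P,A_i)$ at $O$ is $-\sum e_i=\vec 0$. Since this function is convex in $\mathbb{R}^3$ and $H^3_K$, and in a small convex ball of $S^3_K$, $O$ is the Fermat point. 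The star $\bigcup_i OA_i$ is therefore an intermediate tree whose angles are all Platonic, i.e.\ a $PT_K(\{A_1,A_2,A_3,A_4\})$.

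\textbf{Main obstacle.} The only nontrivial point is Step~2, namely that the prescribed cosine matrix is actually a Gram matrix. The paper's Theorem~\ref{theorem1}(A) leaves this implicit, and here it is settled by the explicit vectors. A secondary point is the minimality on the sphere, which is handled by the smallness choice in Step~3.
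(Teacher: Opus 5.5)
Your argument is correct, but it is not the paper's route. The paper's proof is a single sentence. The second configuration is the case $m=4$ of the cyclic construction in Theorem~\ref{theorem1}(A), with the $120^{\circ}$ angles on the pairs $(1,2),(2,3),(3,4),(1,4)$. The first configuration is the case $m=4$ of Proposition~\ref{propthree}. The paper presents these as two different constructions.

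You instead write down explicit unit vectors in $T_O\cong\mathbb{R}^3$ and verify the six angles and $\sum e_i=\vec 0$ directly. You also note that the swap $2\leftrightarrow 3$ carries one configuration onto the other, so the paper's two constructions are one configuration up to relabeling.

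The paper's route buys brevity and places the corollary inside the general families. However, it inherits what those results leave unchecked:
\begin{itemize}
\item that the prescribed cosine matrix is actually the Gram matrix of unit vectors in $\mathbb{R}^{m-1}$ (here its eigenvalues are $0,1,1,2$, which your vectors witness);
\item that $O$ really minimizes $\sum_i d(\cdot,A_i)$.
\end{itemize}
Your explicit vectors settle the first point and your convexity argument settles the second, so your proof is self-contained where the paper's is not.

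One small point needs tightening on $S^3_K$. Convexity on a small ball only shows that $O$ minimizes $f=\sum_i d(\cdot,A_i)$ within that ball. To get the Fermat point globally, suppose $d(O,A_i)\le\rho$ for all $i$. Then $f(P)\ge 4\bigl(d(P,O)-\rho\bigr)>4\rho\ge f(O)$ whenever $d(P,O)>2\rho$. So any minimizer lies in $B(O,2\rho)$, on which $f$ is convex once $3\rho<\frac{\pi R}{2}$. Your gradient computation then finishes the argument.
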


\begin{proof}
We obtain two different constructions, by applying Theorem~\ref{theorem1} Case (A) and Proposition~\ref{propthree}, for $m=4.$
\end{proof}

\begin{proposition}\label{Propfour}
A $PT_{0}(\{A_{1},A_{2},A_{3,}A_{4}\})$ with one added vertex $O$ cannot be prescribed to a Platonic solid (regular tetrahedron) with vertices $\{A_{1},A_{2},A_{3},A_{4}\}$ in $\mathbb{R}^{3}.$
\end{proposition}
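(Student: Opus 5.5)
The plan is to reduce the statement to an angle computation at the added vertex $O$, and then compare with the admissible set of Platonic angles $\{90^{\circ},120^{\circ},150^{\circ},180^{\circ}\}$. Let $A_{1}A_{2}A_{3}A_{4}$ be a regular tetrahedron in $\mathbb{R}^{3}$, and suppose a $PT_{0}(\{A_{1},A_{2},A_{3},A_{4}\})$ exists with one added vertex $O$ of degree four. Then $O$ is a Fermat point with four connections. As in the proof of Theorem~\ref{theorem1}(A), the unit vectors $e_{i}=\vec{OA_{i}}/|OA_{i}|$ satisfy the balancing condition (\ref{eqone}), $e_{1}+e_{2}+e_{3}+e_{4}=\vec{0}$.

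The first step is to locate $O$. The Fermat point of four points is the unique minimizer of $\sum |OA_{i}|$, since this function is strictly convex when the points are not collinear. The regular tetrahedron is invariant under its symmetry group, which acts transitively on the vertices and fixes only the centroid $G$. Therefore the unique minimizer must be $G$; equivalently, one checks directly that the four unit vectors from $G$ to the vertices sum to zero. So $O=G$.

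The second step is to compute the angles $\angle A_{i}GA_{j}$. By symmetry all six are equal. Squaring the balancing condition gives $4+2\cdot 6\cos\theta=0$, so $\cos\theta=-\frac{1}{3}$ and $\theta=\arccos(-\frac13)\approx 109.47^{\circ}$. This value is not in $\{90^{\circ},120^{\circ},150^{\circ},180^{\circ}\}$, so the tree is not Platonic. This is also consistent with Theorem~\ref{theorem1}(E): four edges with all six angles equal would force them all to be $120^{\circ}$, which (E) excludes.

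Finally, I would rule out other trees with a single added vertex. By the characterization in Theorem~\ref{theorem1}, the only Platonic configurations with four edges at $O$ are those of Corollary~\ref{PTtetrahedron} (four angles of $120^{\circ}$ and two of $90^{\circ}$) and the degenerate cross of case (D) (angles of $90^{\circ}$ and $180^{\circ}$). In either case the four rays are not all congruent under a symmetry of the tetrahedron fixing $O$, so they cannot point to the four vertices of a regular tetrahedron with $O$ as its balance point. Concretely, for the configuration of Corollary~\ref{PTtetrahedron}, the four points $A_{i}=O+t_{i}e_{i}$ would need all six mutual distances equal. Writing $|A_{i}A_{j}|^{2}=t_{i}^{2}+t_{j}^{2}-2t_{i}t_{j}\cos\angle A_{i}OA_{j}$ and equating these yields a contradiction.

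I expect this last elimination to be the main obstacle, since the paper only asserts the existence of the configurations in Corollary~\ref{PTtetrahedron} rather than deriving them. The first thing I would try is the minimization argument: any tree with one added vertex joined to all four vertices and prescribed to the tetrahedron is taken to be the Fermat tree, so $O=G$ is forced and the $109.47^{\circ}$ computation settles it.
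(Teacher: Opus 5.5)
Your first two steps are a correct proof, by a genuinely different route from the paper's. The paper never locates $O$. It fixes the Platonic pattern of Corollary~\ref{PTtetrahedron} (four angles of $120^{\circ}$, two of $90^{\circ}$), applies the cosine law in the triangles $A_iOA_j$ with $A_iA_j=1$, and asserts that the resulting system in $a_i=|OA_i|$ has no positive solution. That route silently needs two things:
\begin{itemize}
\item[(i)] That this is the only balanced Platonic pattern for four non-coplanar rays. This is true: $\langle e_i,\sum_j e_j\rangle=0$ makes each row of the cosine matrix sum to $-1$. That forces rows $\{-1,0,0\}$, i.e.\ the planar cross, or rows $\{-\frac12,-\frac12,0\}$, i.e.\ a $4$-cycle of $120^{\circ}$ angles.
\item[(ii)] All six cosine-law equations. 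The four displayed in the paper are in fact satisfied by $a_1=a_2=\frac{1}{\sqrt2}$, $a_3=a_4=\frac{\sqrt5-1}{2\sqrt2}$; only the missing $90^{\circ}$ equation $a_3^2+a_4^2=1$ fails.
\end{itemize}
Your argument skips this case analysis. Since $\nabla_O\sum|OA_i|=-\sum e_i$, a point satisfying (\ref{eqone}) is a critical point of a strictly convex function. It is therefore the unique minimizer, hence $G$ by symmetry, and the single angle $\arccos(-\frac13)$ is visibly not Platonic. This also identifies the tree of Proposition~\ref{Propfive} as the only balanced candidate.

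You should, however, delete your last paragraph. Its claim that the only Platonic four-ray configurations at $O$ are those of Corollary~\ref{PTtetrahedron} and case (D) holds only under balancing, where you are already done. Without balancing the claim is false, and so is the proposition. Take $O$ to be the centroid of the face $A_1A_2A_3$. Then $\angle A_iOA_j=120^{\circ}$ for $i,j\le 3$, and $\angle A_iOA_4=90^{\circ}$ because $A_4$ projects orthogonally onto that centroid. This star is a one-added-vertex tree on the regular tetrahedron with only Platonic angles; it merely fails to have minimal length ($\sqrt3+\sqrt{2/3}>\sqrt6$).

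So the statement is true only because $O$ is forced to be the Fermat point. That comes from the minimal-length clause in the definition of $IT_K(V)$, equivalently the balancing condition (\ref{eqone}) used in Theorem~\ref{theorem1}. Make it your explicit hypothesis instead of trying to rule out other one-vertex trees.

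Relatedly, your aside that six equal angles would have to be $120^{\circ}$ is off. With balancing, six equal angles force $\arccos(-\frac13)$, which is exactly your own computation.
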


\begin{proof}
Let $A_{i}O=a_{i},$ $A_{i}A_{j}=1$ and $\angle A_{1}A_{0}A_{3}=\angle A_{2}A_{0}A_{4}=90^{\circ},$
$\angle A_{1}A_{0}A_{2}=\angle A_{2}A_{0}A_{3}=\angle A_{3}A_{0}A_{4}=\angle A_{1}A_{0}A_{4}=120^{\circ}.$
By using the cosine law in $\triangle A_{i}OA_{j},$ we obtain the following system of equations with respect to $a_{1},a_{2},a_{3},a_{4}:$

\begin{equation}\label{eqt12}
a_{1}^2+a_{2}^2=1,
\end{equation}
\begin{equation}\label{eqt13}
a_{1}^2+a_{3}^2 +a_{1}a_{3}=1,
\end{equation}
\begin{equation}\label{eqt14}
a_{1}^2+a_{4}^2 +a_{1}a_{4}=1,
\end{equation}
\begin{equation}\label{eqt24}
a_{2}^2+a_{4}^2 +a_{2}a_{4}=1,
\end{equation}

The system of four equations (\ref{eqt12}), (\ref{eqt13}), (\ref{eqt14}), (\ref{eqt24}  does not have a positive real solution with respect to $(a_{1},a_{2},a_{3},a_{4}).$

\end{proof}

\begin{proposition}\label{Propfive}
An $APT_{0}(\{A_{1},A_{2},A_{3,}A_{4}\})$ with one added vertex $O$ and each two edges form a Platonic angle with measure $\arccos{-\frac{1}{3}}$ can be prescribed to a Platonic solid (regular tetrahedron) with vertices $\{A_{1},A_{2},A_{3},A_{4}\}$ in $\mathbb{R}^{3}.$
\end{proposition}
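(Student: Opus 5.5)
The plan is to exhibit the point $O$ explicitly: take it to be the barycenter (circumcenter) of the regular tetrahedron $A_{1}A_{2}A_{3}A_{4}\subset\mathbb{R}^{3}$, and then verify directly that the four segments $OA_{i}$ form an $APT_{0}$ in which every pair of edges meets at $\arccos(-\frac{1}{3})$.

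First, place the tetrahedron in coordinates. Set
\[A_{1}=(1,1,1),\quad A_{2}=(1,-1,-1),\quad A_{3}=(-1,1,-1),\quad A_{4}=(-1,-1,1),\]
so all edges have equal length $2\sqrt{2}$, and take $O=(0,0,0)$. Then $|OA_{i}|=\sqrt{3}$ for every $i$, and for $i\neq j$ we have $\vec{OA_{i}}\cdot\vec{OA_{j}}=-1$. Hence $\cos\angle A_{i}OA_{j}=-\frac{1}{3}$ for all six pairs. Consequently the unit vectors $e_{i}$ satisfy the balancing condition (\ref{eqone}), since $\sum_{i}\vec{OA_{i}}=\vec{0}$.

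Second, use this balancing to identify $O$ as the Fermat point. The function $f(X)=\sum_{i}|XA_{i}|$ is convex, and its gradient at $O$ equals $-\sum_{i} e_{i}=\vec{0}$. Therefore $O$ minimizes $f$, and the tree with edges $OA_{i}$ is the Fermat tree. It is an $IT_{0}(V)$ with a single added vertex of degree $4\le m$.

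Third, check the $APT$ clause. The value $\arccos(-\frac{1}{3})\approx 109.47^{\circ}$ can be written as $(90+\epsilon)^{\circ}$ with $\epsilon\approx 19.47$. This $\epsilon$ is positive and differs from $30$, $60$ and $90$, which is exactly the requirement in the definition of an almost Platonic tree. In particular no angle equals $120^{\circ}$, consistent with Theorem~\ref{theorem1}(E) and Proposition~\ref{Propfour}.

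The main obstacle is definitional rather than computational. The statement calls $\arccos(-\frac{1}{3})$ a "Platonic angle", yet the paper's $APT$ definition only allows $90^{\circ}$, $120^{\circ}$ and $180^{\circ}$ together with at least one angle of the form $(90+\epsilon)^{\circ}$. Read literally, that definition admits only one such exceptional value, whereas here all six angles are exceptional. I would adopt the reading that every angle may have measure $(90+\epsilon)^{\circ}$, and state this reading explicitly in the proof.
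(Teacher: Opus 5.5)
Your proof is correct, but it runs in the opposite direction from the paper's. The paper fixes $A_iA_j=1$ and \emph{assumes} all six angles at $O$ equal $\arccos(-\frac13)$. It then applies the law of cosines $1=a_i^2+a_j^2+\frac23 a_ia_j$ in each triangle $A_iOA_j$ to conclude $A_iO=\frac{\sqrt6}{4}$. Implicitly this shows $O$ is forced to be the circumcenter: subtracting the equations for the pairs $(i,j)$ and $(i,k)$ gives $(a_j-a_k)(a_j+a_k+\frac23 a_i)=0$. Existence of the configuration and any minimality are left unstated.

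You instead exhibit the configuration in coordinates and verify the angles directly, which settles existence. You also supply two things the paper omits: the convexity/gradient argument identifying $O$ as the Fermat point, and the check that $\arccos(-\frac13)=90^\circ+\arcsin(\frac13)$ gives an admissible $\epsilon\approx 19.47$. The paper's route gives the converse (uniqueness of $O$ from the angle data); yours gives a complete sufficiency proof.

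Two small remarks. First, your minimality holds among trees with a single added vertex joined to all four terminals, and it cannot be global. The two-Steiner-point tree of Proposition~\ref{Prop6} is shorter: $\sqrt3+\frac{1}{\sqrt2}\approx 2.439<\sqrt6\approx 2.449$ for unit edge. So you should state that you read ``minimal length'' in the definition of $IT_K(V)$ relative to the fixed topology, as you did for the $APT$ clause. Second, that $APT$ worry is milder than you suggest. ``At least one angle'' already permits several, and your six angles share a single exceptional value. The paper's own decagon example (all angles $144^\circ$) confirms the reading you adopt.
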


\begin{proof}
By applying the cosine law in $\triangle A_{i}OA_{j}$ and taking into account that $A_{i}A_{j}=1,$ for $i=j=1,2,3,4,$ we obtain that
$A_{i}O=\frac{\sqrt{6}}{4}.$
\end{proof}

\begin{proposition}\label{Prop6}
A $PT_{0}(\{A_{1},A_{2},A_{3,}A_{4}\})$ with two added vertices $O$ and $O^{\prime}$ can be prescribed to a Platonic solid (regular tetrahedron) with vertices $\{A_{1},A_{2},A_{3},A_{4}\}$ in $\mathbb{R}^{3}.$
\end{proposition}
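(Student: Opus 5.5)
Regard the regular tetrahedron $A_{1}A_{2}A_{3}A_{4}$ with unit edges in $\mathbb{R}^{3}$, and look for the full Steiner tree with topology $A_{1}A_{2}|A_{3}A_{4}$. The two added vertices $O$ and $O^{\prime}$ have degree three. By symmetry, both should lie on the common perpendicular of the opposite edges $A_{1}A_{2}$ and $A_{3}A_{4}$. This segment joins the midpoints $M$ of $A_{1}A_{2}$ and $N$ of $A_{3}A_{4}$, and it has length $\frac{\sqrt{2}}{2}$.

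Coordinates make this concrete. Take
\[
A_{1}=\left(\tfrac12,0,0\right),\quad A_{2}=\left(-\tfrac12,0,0\right),\quad A_{3}=\left(0,\tfrac12,h\right),\quad A_{4}=\left(0,-\tfrac12,h\right),\quad h=\tfrac{\sqrt2}{2}.
\]
Place $O=(0,0,t)$ and $O^{\prime}=(0,0,h-t)$. At $O$ the three unit tangent vectors point towards $A_{1}$, $A_{2}$ and $O^{\prime}$. The balancing condition (\ref{eqone}) with $m=3$, i.e. case (C) of Theorem~\ref{theorem1}, requires these to meet pairwise at $120^{\circ}$.

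By symmetry, $OA_{1}$ and $OA_{2}$ make equal angles with the axis direction $OO^{\prime}$. So it suffices to force $\angle A_{1}OA_{2}=120^{\circ}$. Then $\angle A_{1}OM=60^{\circ}$, hence $\tan 60^{\circ}=\frac{1/2}{t}$, which gives $t=\frac{1}{2\sqrt3}$. The same value of $t$ works at $O^{\prime}$, by the symmetry exchanging the two edge pairs.

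It remains to check that $O$ lies strictly below $O^{\prime}$ on the axis, i.e. $t<h-t$. This reads $\frac{1}{\sqrt3}<\frac{\sqrt2}{2}$, which holds. Then $OO^{\prime}$ is a genuine edge of positive length, and the angle between $OO^{\prime}$ and $OA_{i}$ is $120^{\circ}$. Every angle between incident edges at $O$ and $O^{\prime}$ is therefore $120^{\circ}$, and each boundary vertex has degree one. Hence the tree is a $FST_{0}$ and, by example (1) of Platonic trees, a $PT_{0}(\{A_{1},A_{2},A_{3},A_{4}\})$ with the two added vertices $O$ and $O^{\prime}$.

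The only genuine obstacle is this non-degeneracy check, that the two Steiner points do not collide or cross. If they did, the construction would collapse to the single-vertex case already ruled out by Proposition~\ref{Propfour}.
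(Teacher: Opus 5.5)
Your proposal is correct and follows the same idea as the paper: the two added vertices are degree-three Steiner points where all incident edges meet at $120^{\circ}$, so the tree is a full Steiner tree and hence Platonic. The paper only asserts that such a tree exists, and it also identifies the tree with the Steiner minimal tree, which the statement does not require. You instead construct the tree explicitly with $t=\frac{1}{2\sqrt{3}}$ and check the non-degeneracy condition $2t<\frac{\sqrt{2}}{2}$, which the paper leaves implicit.
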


\begin{proof}
The added vertices $O$ and $O^{\prime}$ are Steiner points having degree three, whose incident edges meet at $120^{\circ}.$
Hence, $MST_{0}(\{A_{1},A_{2},A_{3,}A_{4}\} )=PT_{0}(\{A_{1},A_{2},A_{3,}A_{4}\})$ can be prescribed to a regular (Platonic) tetrahedron $(\{A_{1},A_{2},A_{3,}A_{4}\}.$
\end{proof}





\end{document}